\newtheorem{thm}{Theorem} 
\newtheorem{lem}{Lemma}
\newtheorem{cor}[thm]{Corollary}
\theoremstyle{definition}
\numberwithin{equation}{section}
\newcommand{\Leg}[2]{\left(\frac{#1}{#2}\right)}
\newcommand{\Z}{\mathbb{Z}}
\newcommand{\Oc}{\mathcal{O}}
\begin{document}


\title[Explicit upper bound for the average number of divisors]{
Explicit upper bound\\ 
for the average number of divisors \\of irreducible quadratic polynomials}

\author[K. Lapkova]{Kostadinka Lapkova}
\address{
Graz University of Technology\\
Institute of Analysis and Number Theory\\
Kopernikusgasse 24/II, 8010 Graz, Austria}
\email{lapkova@math.tugraz.at}

\date{12.09.2017}

\begin{abstract} Consider the divisor sum $\sum_{n\leq N}\tau(n^2+2bn+c)$ for integers $b$ and $c$. We extract an asymptotic formula for the average divisor sum in a convenient form, and provide an explicit upper bound for this sum with the correct main term. As an application we give an improvement of the maximal possible number of $D(-1)$-quadruples.
\end{abstract}

\subjclass[2010]{Primary 11N56; Secondary 11D09 } 
\keywords{number of divisors, quadratic polynomial, Dirichlet convolution}

\maketitle


\section{Introduction}\label{sec:intro}

\hspace{0.5cm} Let $\tau(n)$ denote the number of positive divisors of the integer $n$ and $P(x)\in\Z[x]$ be a polynomial. Due to their numerous applications average sums of divisors 
\begin{equation}\label{S1}\sum_{n=1}^N\tau\left(P(n)\right)
\end{equation}
have obtained a lot of attention, e.g. in \cite{delmer,elsh-tao,erdos,hooley1,scour1,scour2} . The current paper aims to improve the author's results from \cite{lapkova} concerning explicit upper bound for divisor sums over certain irreducible quadratic polynomials $P(x)$. More precisely, while the upper bound obtained in \cite{lapkova} appears to be the first one of the right order of magnitude $N\log N$ with completely explicit constants, it does not provide the expected constant in the main term. Here we fill this gap achieving the correct main term as in the asymptotic formula for the corresponding divisor sum. A more detailed introduction to the subject can be found in \cite{lapkova} and \cite{mckee3}.   

\par Only very recently \cite{dudek} and \cite{lapkova2} provided asymptotic formulae for the sum (\ref{S1}) for some \emph{reducible} quadratic polynomials, whereas the asymptotic formulae for \emph{irreducible} quadratic polynomials are classical, due to Scourfield \cite{scour1}, Hooley \cite{hooley} and McKee \cite{mckee}, \cite{mckee2}, \cite{mckee3}. 
\par Let $P(x)=x^2+Bx+C$ for integers $B$ and $C$ such that $P(x)$ is irreducible. Denote 
\begin{equation}\label{def:rho}
\rho(d,P)=\rho(d):=\#\left\{0\leq m<d : P(m)\equiv 0\pmod d \right\}.
\end{equation}
Then we have
$$\sum_{n\leq N}\tau(P(n))\sim \lambda N\log N,$$
as $N\rightarrow\infty$, for some $\lambda$ depending on $B$ and $C$. Hooley \cite{hooley} showed that for irreducible $P(x)=x^2+C$ we have 
$$\lambda=\frac{8}{\pi^2}\sum_{\alpha=0}^\infty \frac{\rho(2^\alpha)}{2^\alpha}\sum_{\substack{d^2\mid C\\d \text{ odd}}}\frac{1}{d}\sum_{\substack{l=1\\l \text{ odd}}}^\infty \frac{1}{l}\Leg{-C/d^2}{l},$$
while for the more general $P(x)=x^2+Bx+C$ with $\Delta=B^2-4C$ not a square, McKee \cite{mckee3} obtained
\begin{equation}\label{eq:mckee}
\lambda=\left\lbrace\begin{array}{ll}
12H^*(\Delta)/(\pi\sqrt{|\Delta|})& \text{if }\Delta <0,\\
12H^*(\Delta)\log{\epsilon_\Delta}/(\pi^2\sqrt{\Delta})& \text{if }\Delta >0,
\end{array}\right.
\end{equation}
where $H^*(\Delta)$ is a weighted class number, and $\epsilon_\Delta$ for $\Delta>0$ is a fundamental unit. Nevertheless we would prove independently the asymptotic formula for the specialized irreducible quadratic polynomials in which we are interested. We achieve a compact main term with a constant $\lambda$ which is easier to formulate and easy to compare with the main term in our explicit upper bound.

\par Our first result is the following theorem. 


\begin{thm}\label{thm1}Let $b$ and $c$ be integers, such that the discriminant $\delta:=b^2-c$ is non-zero and square-free, and $\delta\not\equiv 1\pmod 4$.  Then for $N\rightarrow\infty$ we have the asymptotic formula
$$\sum_{n=1}^N \tau(n^2+2bn+c)=\frac{2}{\zeta(2)}L(1,\chi) N\log N + \Oc(N),$$
where $\chi(n)=\Leg{4\delta}{n}$ for the Kronecker symbol $\Leg{.}{.}$.
\end{thm}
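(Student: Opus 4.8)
The plan is to apply the Dirichlet hyperbola method to rewrite the divisor sum as a sum over divisors $d$, and then to evaluate the resulting mean values of $\rho(d)/d$ and of $\rho(d)$ by factoring the Dirichlet series $\sum_d \rho(d) d^{-s}$ into $\zeta$- and $L$-functions. First I would reduce to a convenient shape: writing $Q(m):=m^2-\delta$ we have $n^2+2bn+c=(n+b)^2-\delta=Q(n+b)$, so it is equivalent to estimate $\sum_{m=b+1}^{b+N}\tau(Q(m))$, and $\rho(d)=\rho(d,Q)$ since $Q$ is a shift of $P$. Because $\delta$ is squarefree and $\delta\not\equiv1\pmod4$ it is not a perfect square; hence $Q(m)=0$ has no solution, $Q(m)>0$ for all but $\Oc(1)$ of the $m$, and $Q(m)$ is a perfect square for only $\Oc(1)$ of the $m$ (as $(m-y)(m+y)=\delta$ has finitely many factorisations). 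For every $m$ with $Q(m)$ a positive non-square, $\tau(Q(m))=2\,\#\{d\mid Q(m):d^2<Q(m)\}$; interchanging summation, noting $d^2<Q(m)=m^2-\delta$ is equivalent to $m>\sqrt{d^2+\delta}=d+\Oc(1/d)$, and that the relevant range is $1\le d\le D$ with $D=N+\Oc(1)$, one gets
$$\sum_{m=b+1}^{b+N}\tau(Q(m))=2\sum_{d\le D}\#\bigl\{m:\ \lceil\sqrt{d^2+\delta}\,\rceil\le m\le b+N,\ d\mid Q(m)\bigr\}+\Oc(1).$$
The inner count is $\tfrac{\rho(d)}{d}\bigl(b+N-\sqrt{d^2+\delta}\bigr)+\Oc(\rho(d))=\tfrac{(N-d)\rho(d)}{d}+\Oc(\rho(d))$, so
$$\sum_{m=b+1}^{b+N}\tau(Q(m))=2N\sum_{d\le D}\frac{\rho(d)}{d}-2\sum_{d\le D}\rho(d)+\Oc\Bigl(\sum_{d\le D}\rho(d)\Bigr)+\Oc(1).$$

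Next I would evaluate the two sums over $d$. Because $\delta$ is squarefree and $\delta\not\equiv1\pmod4$, the integer $4\delta$ is a fundamental discriminant, so $\chi=\Leg{4\delta}{\cdot}$ is the primitive real character of conductor $4|\delta|$. A local computation of $\rho(p^k)$ then gives the identity
$$\sum_{d=1}^{\infty}\frac{\rho(d)}{d^s}=\frac{\zeta(s)L(s,\chi)}{\zeta(2s)}\qquad(\operatorname{Re}s>1).$$
Indeed, for $p\nmid4\delta$ Hensel's lemma gives $\rho(p)=1+\chi(p)$ and $\rho(p^k)=2$ or $0$ according as $\chi(p)=1$ or $-1$, so the local factor on the left is $(1-p^{-2s})/\bigl((1-p^{-s})(1-\chi(p)p^{-s})\bigr)$; the content of the hypotheses is precisely that this identity of Euler factors persists at $p=2$ and at $p\mid\delta$. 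Writing $L(s,\chi)/\zeta(2s)=\sum_e h(e)e^{-s}$ one has $\rho=\mathbf 1\ast h$, and $\sum_{e\le X}h(e)\ll_\delta\sqrt X$ by the Pólya--Vinogradov inequality; a routine hyperbola-method and partial-summation argument then yields $\sum_{d\le D}\rho(d)=\kappa D+\Oc(D^{3/4})$ and $\sum_{d\le D}\rho(d)/d=\kappa\log D+\Oc(1)$, where $\kappa=L(1,\chi)/\zeta(2)$. Since $D=N+\Oc(1)$ and $\sum_{d\le D}\rho(d)=\Oc(N)$, substituting into the last display gives
$$\sum_{n=1}^{N}\tau(n^2+2bn+c)=2\kappa N\log N+\Oc(N)=\frac{2}{\zeta(2)}L(1,\chi)\,N\log N+\Oc(N),$$
which is the assertion.

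I expect the main obstacle to be the local analysis at the bad primes in the Dirichlet-series identity: checking that the Euler factors of $\sum_d\rho(d)d^{-s}$ at $p=2$ and at $p\mid\delta$ agree with those of $\zeta(s)L(s,\chi)/\zeta(2s)$. This is where ``$\delta$ squarefree'' and ``$\delta\not\equiv1\pmod4$'' enter in an essential way: together they force $4\delta$ to be a fundamental discriminant, so that $\chi$ is primitive of conductor $4|\delta|$ and the ramified root counts $\rho(2^k)$ and $\rho(p^k)$, $p\mid\delta$, come out exactly right; in the excluded case $\delta\equiv1\pmod4$ one would instead encounter the character attached to $\delta$ itself together with extra local corrections, changing the constant. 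Everything else — the $\Oc(1)$ shifts and tails in the hyperbola method, the contribution of the $\Oc(1)$ exceptional values of $m$, and the elementary estimates for $\sum_{d\le D}\rho(d)$ and $\sum_{d\le D}\rho(d)/d$ — is routine bookkeeping by comparison.
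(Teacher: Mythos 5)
Your proposal is correct and follows essentially the same route as the paper: the Dirichlet hyperbola method reduces the problem to $\sum_{d\le D}\rho(d)/d$ and $\sum_{d\le D}\rho(d)$, and these are evaluated via the factorization $\sum_d\rho(d)d^{-s}=\zeta(s)L(s,\chi)/\zeta(2s)$, which is exactly the paper's Lemma~\ref{lem1} (stated there in convolution form as $\rho=\mu^2\ast\chi$, with the local analysis at $p=2$ and $p\mid\delta$ that you flag as the crux carried out in the cited earlier work). The only cosmetic difference is that you split the Euler product as $\mathbf{1}\ast h$ with $h$ carrying $L(s,\chi)/\zeta(2s)$ and invoke P\'olya--Vinogradov for $\sum_{e\le X}h(e)$, whereas the paper splits it as $\mu^2\ast\chi$ and uses $\sum_{l\le N}\mu^2(l)/l=\log N/\zeta(2)+\Oc(1)$ together with $\sum_{m\le M}\chi(m)/m=L(1,\chi)+\Oc(1/M)$; both yield the same constant $L(1,\chi)/\zeta(2)$ and an $\Oc(N)$ error.
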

In particular, when we consider the polynomial $f(n)=n^2+1$ we have $\delta=-1$ and $\chi(n)=\Leg{-4}{n}$ is the non-principal primitive Dirichlet character modulo $4$. Then $L(1,\chi)=\sum_{n=0}^\infty (-1)^n/(2n+1)=\pi/4$ by the formula of Leibniz. We substitute $\zeta(2)=\pi^2/6$ in Theorem \ref{thm1} and recover the well-known asymptotic
\begin{equation}
\sum_{n=1}^N \tau(n^2+1)
=\frac{3}{\pi}N\log N +\Oc(N).
\end{equation}

We note that $4\delta=\Delta$ is a fundamental discriminant and the weighted class number $H^*(\Delta)$ in McKee's formula \eqref{eq:mckee} is the usual class number for $\Delta>0$ and $\Delta<-4$. When $\delta=-1$ we have an adjustment by the corresponding root number. Indeed, for negative discriminants $H^*(\Delta)$ is the Hurwitz class number for which the precise relation with the usual class number is described for example in [\cite{cohen}, Lemma $5.3.7$]. Then the Dirichlet class number formula yields the equality of the constant $2L(1,\chi)/\zeta(2)$ with $\lambda$ from \eqref{eq:mckee} for the polynomials we consider in Theorem \ref{thm1}. Our proof, however, is different than McKee's \cite{mckee}, \cite{mckee2}, \cite{mckee3}, and is rather similar to Hooley's argument from \cite{hooley}.

The novelty in this paper is the following explicit upper bound, which achieves the correct main term as in the asymptotic formula from Theorem \ref{thm1}.


\begin{thm}\label{thm2}
Let $f(n)=n^2+2bn+c$ for integers $b$ and $c$, such that the discriminant $\delta:=b^2-c$ is non-zero and square-free, and $\delta\not\equiv 1\pmod 4$.  Assume also that for $n\geq 1$ the function $f(n)$ is non-negative. Then for any $N\geq 1$ satisfying $f(N)\geq f(1)$, and $X:=\sqrt{f(N)}$, we have the inequality
\begin{align*}
\sum_{n=1}^N \tau(n^2+2bn+c)&\leq\frac{2}{\zeta(2)}L(1,\chi) N\log X \\
&+ \left(2.332L(1,\chi)+\frac{4M_\delta}{\zeta(2)}\right)N+\frac{2M_\delta}{\zeta(2)} X\\
&+4\sqrt{3}\left(1-\frac 1{\zeta(2)}\right)M_\delta\frac{N}{\sqrt{X}}\\
&+2\sqrt{3}\left(1-\frac 1{\zeta(2)}\right)M_\delta\sqrt{X}
\end{align*}
where $\chi(n)=\Leg{4\delta}{n}$ for the Kronecker symbol $\Leg{.}{.}$ and 
\begin{equation*}
M_\delta=\left\lbrace
\begin{array}{ll}
\frac{4}{\pi^2}\delta^{1/2}\log{4\delta}+1.8934\delta^{1/2}+1.668 &,\text{ if } \delta>0;\\
 & \\
\frac{1}{\pi}|\delta|^{1/2}\log{4|\delta|}+1.6408|\delta|^{1/2}+ 1.0285&,\text{ if } \delta<0.
\end{array}\right.
\end{equation*}
\end{thm}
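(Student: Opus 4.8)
The plan is to estimate the divisor sum by writing $\tau(f(n)) = \sum_{d \mid f(n)} 1$ and splitting divisors at the point $X = \sqrt{f(N)}$. Since each $m = f(n)$ with $n \le N$ satisfies $m \le X^2$, the classical hyperbola trick gives
\begin{equation*}
\tau(m) = \#\{d \mid m : d \le X\} + \#\{d \mid m : d > X\} \le 2\,\#\{d \mid m : d \le X\},
\end{equation*}
but we want the correct \emph{constant}, so rather than using this crude doubling I would keep careful track. Writing $\tau(m) = 2\#\{d \mid m : d < \sqrt{m}\} + [\text{$m$ a square}]$ and noting $\sqrt{m} = \sqrt{f(n)} \le X$, one gets
\begin{equation*}
\sum_{n=1}^N \tau(f(n)) \le 2\sum_{n=1}^N \#\{d < X : d \mid f(n)\} + O(N) = 2\sum_{d < X} \#\{1 \le n \le N : d \mid f(n)\} + O(N),
\end{equation*}
where the $O(N)$ bounds the count of $n$ with $f(n)$ a perfect square (at most $O(N)$ trivially, in fact far fewer). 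The inner count is $\sum_{d<X}$ over the number of $n \le N$ in residue classes mod $d$ on which $f$ vanishes; this is $\rho(d)\lfloor N/d\rfloor + O(\rho(d))$, where $\rho(d)$ is the multiplicative function from \eqref{def:rho}.

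The next step is to handle $\sum_{d < X} \rho(d)/d$ and the error $\sum_{d<X}\rho(d)$ with explicit constants. Here the key arithmetic input is that $\rho$ is multiplicative with $\rho(p) = 1 + \chi(p)$ for odd primes $p \nmid \delta$ (and controlled behavior at $p = 2$ and $p \mid \delta$, using that $\delta$ is squarefree and $\delta \not\equiv 1 \pmod 4$), so that formally $\sum_d \rho(d) d^{-s} = \zeta(s) L(s,\chi) G(s)$ for a Dirichlet series $G$ absolutely convergent for $\Re s > 1/2$, whose value $G(1)$ together with $\zeta$ and $L$ produces the constant $2L(1,\chi)/\zeta(2)$ after the Euler-product bookkeeping. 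To get \emph{explicit} estimates I would write $\rho = \mathbf{1} * \chi * g$ as a Dirichlet convolution, where $g$ is a small multiplicative correction supported essentially on divisors of $2\delta$ and on squares, and then estimate $\sum_{d<X}(\mathbf{1}*\chi)(d)/d$ by partial summation against the bound $\sum_{d \le t}\chi(d) = O(\sqrt{|\delta|}\log|\delta|)$ (the Pólya–Vinogradov inequality, whose explicit form — with the precise constants $4/\pi^2$ for real primitive characters to a positive modulus and $1/\pi$ for a negative one — is exactly the source of the $M_\delta$ expression, including the $\log 4\delta$ and the numerical constants $1.8934, 1.668, 1.6408, 1.0285$). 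Convolving in the remaining factor $g$ and truncating the tail at $X$ produces the $X$, $N/\sqrt X$ and $\sqrt X$ error terms, whose coefficients $2M_\delta/\zeta(2)$, $4\sqrt 3(1-1/\zeta(2))M_\delta$, $2\sqrt 3(1-1/\zeta(2))M_\delta$ come from bounding the tails $\sum_{d>X}|g(d)|/\sqrt d$ and the like; the factor $1 - 1/\zeta(2)$ and the $\sqrt 3$ arise from isolating the squarefull part of $g$.

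Concretely I would carry out the steps in this order: (1) reduce to $2\sum_{d<X}(\rho(d)\lfloor N/d\rfloor + O(\rho(d)))$ with an explicit $O(N)$ for the square contribution; (2) record the multiplicative formula for $\rho$ and factor $\sum \rho(d)d^{-s} = \zeta(s)L(s,\chi)/\zeta(2s) \cdot (\text{local factors at }2\mid\delta)$, then write this as a convolution $\mathbf 1 * \chi * g$ with $g$ explicitly bounded; (3) prove an explicit version of $|\sum_{n\le t}\chi(n)| \le M_\delta$ — really the content of $M_\delta$ — via explicit Pólya–Vinogradov; (4) apply partial summation to get $\sum_{d<X}(\mathbf 1*\chi)(d)/d = L(1,\chi)\log X + (\text{const})L(1,\chi) + O(M_\delta/\sqrt X \cdot \dots)$ and similarly $\sum_{d<X}(\mathbf 1*\chi)(d) = (\text{const})X + O(M_\delta\sqrt X)$; (5) convolve with $g$, collect the main term $\frac{2}{\zeta(2)}L(1,\chi)N\log X$ and assemble the error terms with their explicit coefficients; (6) absorb the constant $2.332$ by combining the various $O(1)$-type constants from partial summation (the Euler–Mascheroni-type constant, the $\log 2$ from the dyadic split at $X$, bounds for $L'(1,\chi)/L(1,\chi)$, etc.). The main obstacle will be step (6) together with the precise tracking in steps (3)–(5): every inequality must be made totally explicit and the numerous constants combined so as not to exceed the stated coefficients, in particular keeping the leading constant exactly $2L(1,\chi)/\zeta(2)$ rather than something slightly larger — this is precisely the improvement over \cite{lapkova}, and it forces one to use the sharp (rather than convenient) form of Pólya–Vinogradov and to avoid any lossy doubling of divisors beyond the unavoidable factor $2$ from the hyperbola method.
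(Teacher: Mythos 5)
Your plan follows essentially the same route as the paper: Dirichlet's hyperbola method, a convolution identity for $\rho$, an explicit P\'olya--Vinogradov inequality as the source of $M_\delta$, and explicit counts of squarefree integers. Indeed your decomposition $\rho=\mathbf{1}*\chi*g$ with $g$ supported on squares is literally the paper's Lemma \ref{lem1}, $\rho=\mu^2*\chi$, since $\mathbf{1}*g=\mu^2$; under the hypotheses on $\delta$ ($\delta$ squarefree, $\delta\not\equiv 1\pmod 4$, so $4\delta$ a fundamental discriminant) there are no extra local factors at $2$ or at $p\mid\delta$, which is exactly what makes the bookkeeping clean. Two points in your sketch need repair before it lands on the stated constants. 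First, replace $\tau(m)=2\#\{d\mid m:\ d<\sqrt m\}+[m\text{ a square}]$ by the honest inequality $\tau(m)\le 2\#\{d\mid m:\ d\le\sqrt m\}$: an unquantified $O(N)$ for the square contribution is not admissible in a fully explicit bound (though here $f(n)=(n+b)^2-\delta$ is a perfect square for at most finitely many $n$, so it could be made explicit if you insisted). Second, the constant $2.332$ does not arise from Euler--Mascheroni-type constants, a dyadic split, or bounds for $L'(1,\chi)/L(1,\chi)$ --- the last of these would destroy uniformity in $\delta$, whereas $2.332$ is $\delta$-independent; it is simply $2\times 1.166$ from Ramar\'e's computer-assisted bound $\sum_{n\le x}\mu^2(n)/n\le \log x/\zeta(2)+1.166$ (Lemma \ref{lem:ramare}). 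The paper gets the exact coefficients by the grouping $\sum_{d\le X}\rho(d)/d=\sum_{l\le X}\frac{\mu^2(l)}{l}\sum_{m\le X/l}\frac{\chi(m)}{m}$, bounding the inner sum by $L(1,\chi)+2M_\delta l/X$ via Abel summation against P\'olya--Vinogradov, and similarly $\sum_{d\le X}\rho(d)\le M_\delta\sum_{l\le X}\mu^2(l)$ with Moser--MacLeod's explicit estimate supplying the $\sqrt3\left(1-1/\zeta(2)\right)$ coefficients; your proposed grouping (partial summation on $\mathbf{1}*\chi$ first, then convolving in $g$) is workable in principle but would produce different secondary terms, so to match the theorem as stated you should adopt the former order.
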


Note that we can formulate the theorem by extracting positive constants $C_1, C_2, C_3$ such that 
\begin{equation*}
\sum_{n=1}^N \tau(n^2+2bn+c)\leq \frac{2}{\zeta(2)}L(1,\chi) N\log N+C_1N+C_2\sqrt{N}+C_3,
\end{equation*}
because $X=\sqrt{f(N)}=N+\Oc(1)$. However we restrained ourselves from doing so in order to keep our result more precise for eventual numerical applications. Still, improvements of the lower order terms are very likely, for example by applying the theory developed by Akhilesh and Ramar\'e \cite{ramareAkhilesh}.

\par When we know the exact form of the quadratic polynomial and the corresponding character, we might achieve even better upper bounds. This is the case of the polynomial $f(n)=n^2+1$ when the following corollary holds.


\begin{cor}\label{cor1}
For any integer $N\geq 1$ we have
$$\sum_{n\leq N}\tau(n^2+1)<\frac{3}{\pi}N\log N +3.0475 N+1.3586 \sqrt{N}.$$
\end{cor}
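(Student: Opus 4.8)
The plan is to specialize Theorem~\ref{thm2} to the polynomial $f(n)=n^2+1$, for which $b=0$, $c=1$, and $\delta=-1$, so that $\chi$ is the non-principal character modulo $4$ and $L(1,\chi)=\pi/4$. The hypotheses of Theorem~\ref{thm2} are satisfied: $\delta=-1$ is non-zero, square-free, and $-1\equiv 3\pmod 4$; the function $f(n)=n^2+1$ is positive for all $n\geq 1$; and $f(N)\geq f(1)$ holds for every $N\geq 1$. With $\delta=-1$ the quantity $M_\delta$ from the $\delta<0$ branch becomes $M_{-1}=\frac1\pi\log 4+1.6408+1.0285=\frac1\pi\log 4+2.6693$, a fixed numerical constant, which I would bound from above by a clean decimal. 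Substituting $2L(1,\chi)/\zeta(2)=2\cdot(\pi/4)/(\pi^2/6)=3/\pi$ recovers the stated main term $\frac{3}{\pi}N\log X$.

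Next I would replace $\log X$ by $\log N$ in the main term. Since $X=\sqrt{N^2+1}$ we have $\log X=\frac12\log(N^2+1)=\log N+\frac12\log(1+1/N^2)\leq \log N+\frac{1}{2N^2}$, so the main term $\frac3\pi N\log X$ exceeds $\frac3\pi N\log N$ by at most $\frac{3}{2\pi N}\leq \frac{3}{2\pi}$, a bounded error that I fold into the constant coefficient of the lower-order terms. Similarly $\sqrt{X}=(N^2+1)^{1/4}$ and $X=\sqrt{N^2+1}$ must be compared with $\sqrt N$ and $N$ respectively; here $X=\sqrt{N^2+1}\leq N+\frac{1}{2N}\leq N+\frac12$ for $N\geq1$, and $\sqrt X=(N^2+1)^{1/4}\leq (N+1)^{1/2}\leq \sqrt N+1$ (indeed $\le\sqrt N+\tfrac1{2\sqrt N}$), while the terms $N/\sqrt X$ and $\sqrt X$ are each $O(\sqrt N)$ and in fact bounded by explicit multiples of $\sqrt N$ plus a constant for $N\ge1$. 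Collecting all contributions proportional to $N$ into a single constant $C_1$, all contributions proportional to $\sqrt N$ (together with the bounded leftovers, which are themselves $\le$ a constant times $\sqrt N$ since $N\ge1$) into a single constant $C_2$, and discarding nothing, I would check numerically that $C_1<3.0475$ and $C_2<1.3586$.

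The only real work is the arithmetic bookkeeping: the coefficient of $N$ in Theorem~\ref{thm2} is $2.332\,L(1,\chi)+4M_\delta/\zeta(2)=2.332\cdot\frac\pi4+\frac{6}{\pi^2}\cdot 4 M_{-1}$, to which one adds the $\frac{3}{2\pi N}\le\frac3{2\pi}$ slack from $\log X$ versus $\log N$, the contribution $\frac{2M_\delta}{\zeta(2)}\cdot\frac12=\frac{3M_{-1}}{\pi^2}$ coming from $X\le N+\frac12$ when that term is split, and the similar split of the $4\sqrt3(1-1/\zeta(2))M_\delta\, N/\sqrt X$ and $2\sqrt3(1-1/\zeta(2))M_\delta\sqrt X$ terms once they are bounded by constant multiples of $\sqrt N$ and constants; one must verify the resulting decimal is below $3.0475$. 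Likewise the coefficient of $\sqrt N$ comes from $\frac{2M_\delta}{\zeta(2)}=\frac{12M_{-1}}{\pi^2}$ (from $X$ compared to $\sqrt N$ after re-examining — more carefully, from the genuine $\sqrt X$-type and $N/\sqrt X$-type terms, each $\le$ an explicit multiple of $\sqrt N$) plus the bounded leftovers, and must be checked to be below $1.3586$. The main obstacle is purely that these constants are tight, so the inequalities $X\le N+\tfrac1{2N}$, $\sqrt X\le \sqrt N+\tfrac1{2\sqrt N}$, $N/\sqrt X\le\sqrt N$, and $\log X\le\log N+\tfrac1{2N^2}$ must be applied sharply rather than crudely, and one should double-check that no term has been double-counted when passing from $X$ to $N$; there is no conceptual difficulty beyond Theorem~\ref{thm2} itself.
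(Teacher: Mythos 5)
Your plan---specialize Theorem~\ref{thm2} to $\delta=-1$ and then carefully trade $X$ for $N$---cannot yield the corollary, because the constants coming out of Theorem~\ref{thm2} are far too large. With $\delta=-1$ one has $M_{-1}=\frac1\pi\log 4+1.6408+1.0285\approx 3.1106$, so the coefficient of $N$ in Theorem~\ref{thm2} is $2.332\cdot\frac\pi4+\frac{4M_{-1}}{\zeta(2)}\approx 1.83+7.56\approx 9.40$, and on top of that the term $\frac{2M_{-1}}{\zeta(2)}X\approx 3.78\,N$ must be added, giving an $N$-coefficient of roughly $13.2$; the $N/\sqrt X$ and $\sqrt X$ terms likewise contribute about $12.7\sqrt N$. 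No amount of sharp bookkeeping in the passage from $X=\sqrt{N^2+1}$ to $N$ (which only costs $O(1)$) can close the gap to $3.0475\,N+1.3586\,\sqrt N$. You flagged the constants as ``tight'' but never computed them; in fact they fail by a factor of more than four, so the approach is wrong, not merely incomplete.

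The paper does not obtain Corollary~\ref{cor1} as a numerical specialization of Theorem~\ref{thm2}; it reruns the argument for $f(n)=n^2+1$ exploiting two special features. First, for integers $d,n$ the condition $d\le\sqrt{n^2+1}$ is equivalent to $d\le n$, so the hyperbola method can be arranged as
$2\sum_{d\le N}\bigl(\#\{1\le n\le N: d\mid n^2+1\}-\#\{1\le n<d: d\mid n^2+1\}\bigr)\le 2N\sum_{d\le N}\rho(d)/d$;
the subtracted term absorbs the rounding remainder, so the contribution $2\sum_{d\le X}\rho(d)$ --- the source of the $\frac{2M_\delta}{\zeta(2)}X$ and $2\sqrt3\left(1-\frac1{\zeta(2)}\right)M_\delta\sqrt X$ terms in Theorem~\ref{thm2} --- disappears entirely. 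Second, for the character modulo $4$ the partial sums $\sum_{n\le Z}\chi(n)$ lie in $\{0,1\}$, so the tail of $\sum\chi(n)/n$ is at most $1/N$ in absolute value, replacing the bound $2M_\delta/N\approx 6.22/N$ from Lemma~\ref{lem:chiexpl}. With these two improvements the $N$-coefficient becomes $\frac{\pi}{2}\cdot 1.166+\frac2{\zeta(2)}\approx 3.0474$ and the $\sqrt N$-coefficient $2\sqrt3\left(1-\frac1{\zeta(2)}\right)\approx 1.3582$, which is exactly the stated bound. To salvage your write-up you would need to carry out these two sharpenings rather than quote Theorem~\ref{thm2} as a black box.
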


Just as in \cite{lapkova} we give an application of the latter inequality. Define a \textit{$D(n)-m$-tuple} for a nonzero integer $n$ and a positive integer $m$ to be a set of $m$ integers such that the product of any two of them increased by $n$ is a perfect square. It is conjectured that there are no $D(-1)$-quadruples but currently it is only known that there are at most $4.7\cdot 10^{58}$ $D(-1)$-quadruples \cite{lapkova}. The latter number upgraded the previous maximal possible bound $3.01\times 10^{60}$ due to Trudgian \cite{trudgian}, which in turn improved results from \cite{cipu} and \cite{elsh}.  

\par Plugging the upper bound of Corollary \ref{cor1} in the proof of [\cite{elsh}, Theorem 1.3] from the paper of Elsholtz, Filipin and Fujita we obtain another slight improvement.


\begin{cor}\label{thm:Dm1}
There are at most $3.677\cdot 10^{58}$ $D(-1)$-quadruples.
\end{cor}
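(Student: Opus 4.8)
The plan is to derive Corollary \ref{thm:Dm1} by feeding the explicit bound of Corollary \ref{cor1} into the combinatorial-analytic machinery of Elsholtz--Filipin--Fujita's proof of [\cite{elsh}, Theorem~1.3], exactly as was done in \cite{lapkova} with the weaker explicit bound available there. First I would recall the structure of that argument: one assumes a $D(-1)$-quadruple $\{a,b,c,d\}$ with $a<b<c<d$ exists, and reduces the count of such quadruples to a counting problem controlled by the number of integers $n\le N$ for which $n^2+1$ has many divisors --- concretely, the gap-principle and the bound on solutions of the relevant Pellian equations produce, for each admissible pair, a constraint whose total mass is governed by $\sum_{n\le N}\tau(n^2+1)$ for an explicit cutoff $N$ depending on the conjectural upper size of $d$ (via $\log d \ll 10^{30}$ or similar, coming from linear forms in logarithms à la Matveev, as used in \cite{elsh,cipu,trudgian}).

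Next I would isolate the precise place in [\cite{elsh}, proof of Theorem~1.3] where the divisor sum enters --- it appears as an upper estimate of the form $\sum_{n\le N}\tau(n^2+1) \le \kappa_1 N\log N + \kappa_2 N + \kappa_3\sqrt N$ --- and simply substitute the sharper triple $(\kappa_1,\kappa_2,\kappa_3) = (3/\pi,\, 3.0475,\, 1.3586)$ furnished by Corollary \ref{cor1}, in place of whatever constants \cite{lapkova} used. Because the rest of the Elsholtz--Filipin--Fujita argument is a chain of inequalities that is monotone in this divisor bound, improving the main constant from the \cite{lapkova} value down to the optimal $3/\pi$ (together with the improved lower-order constants) propagates directly to a smaller admissible $N$, hence to a smaller final count. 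I would then carry out the same arithmetic optimization over the free parameters in their proof (the thresholds separating ``small'', ``medium'' and ``large'' elements of the quadruple) that produced $4.7\cdot 10^{58}$, now with the new divisor bound, and read off the resulting numerical maximum, which comes out to $3.677\cdot 10^{58}$.

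The only genuinely delicate point is bookkeeping: one must verify that every step of [\cite{elsh}, Theorem~1.3] that invoked the \cite{lapkova} bound did so through an inequality in the ``correct direction'', so that the substitution is legitimate, and that the cutoff $N$ at which Corollary \ref{cor1} is applied indeed satisfies the hypothesis $N\ge 1$ (trivially true here) with the constants as stated; there is no hypothesis $f(N)\ge f(1)$ to check for $f(n)=n^2+1$ since that is automatic. I do not expect any new analytic input to be needed --- all the hard analysis is already packaged in Corollary \ref{cor1} and in the linear-forms-in-logarithms bounds of \cite{elsh} --- so the proof is genuinely a short deduction, and I would present it as such: quote [\cite{elsh}, Theorem~1.3] and its proof, point to the divisor-sum input, substitute Corollary \ref{cor1}, and recompute the final constant. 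The main obstacle, to the extent there is one, is purely computational verification of the optimized numerical value $3.677\cdot 10^{58}$, which I would check independently rather than merely trusting the chain of estimates.
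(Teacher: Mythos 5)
Your proposal takes essentially the same route as the paper, which itself gives no more detail than ``plugging the upper bound of Corollary \ref{cor1} into the proof of [\cite{elsh}, Theorem 1.3]'' and reading off the improved numerical constant. The surrounding description of the Elsholtz--Filipin--Fujita machinery and the bookkeeping caveats are reasonable but not required beyond what the paper itself asserts.
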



\section{Proof of Theorem \ref{thm2}}\label{sec:2}

We start with proving the explicit upper bound in Theorem \ref{thm2}, as the claims required for the proof of Theorem \ref{thm1} can be easily adapted by the lemmae from this section.

\par Let us consider the polynomial $f(n)=n^2+2bn+c$ with integer coefficients $b$ and $c$, and the function $\rho(d)$ defined in \eqref{def:rho} counts the roots of $f(n)$ in a full residue system modulo $d$. Let $\delta=b^2-c$ and $\chi(n)=\Leg{4\delta}{n}$ for the Kronecker symbol $\Leg{.}{.}$.

\par The core of the proofs of both Theorem \ref{thm1} and Theorem \ref{thm2} is the following convolution lemma which we proved in \cite{lapkova}.

 
\begin{lem}[\cite{lapkova}, Lemma 2.1]\label{lem1} Let $\delta=b^2-c$ be square-free and $\delta\not\equiv 1\pmod 4$. Then we have the identity
$$ \rho(d)=\sum_{lm=d}\mu^2(l)\chi(m).$$
\end{lem}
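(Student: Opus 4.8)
The plan is to reduce the identity to a verification on prime powers. First I would complete the square, writing $f(m)=(m+b)^2-\delta$; since the map $m\mapsto m+b$ permutes the residues modulo $d$, this gives $\rho(d)=\#\{x\bmod d:\ x^2\equiv\delta\pmod d\}$. Next I would observe that both sides of the claimed identity are multiplicative functions of $d$ — the left-hand side by the Chinese Remainder Theorem, and the right-hand side because it is the Dirichlet convolution $\mu^2*\chi$ of the multiplicative function $\mu^2$ with the completely multiplicative function $\chi$. Hence it suffices to check the identity at prime powers, and since $\mu^2(p^j)=0$ for $j\ge 2$, for $d=p^k$ with $k\ge 1$ the right-hand side collapses to $\chi(p^k)+\chi(p^{k-1})$; so the goal becomes
$$\rho(p^k)=\chi(p^k)+\chi(p^{k-1})\qquad(k\ge 1),$$
together with the trivial case $\rho(1)=\chi(1)=1$.

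Then I would split into three cases. If $p$ is odd and $p\nmid\delta$, then (using that $4$ is a square) $\chi(p)=\Leg{\delta}{p}=\pm1$ according to whether $\delta$ is a quadratic residue modulo $p$; Hensel's lemma lifts solutions uniquely, since $p\nmid x$ for any solution $x$, so $x^2\equiv\delta\pmod{p^k}$ has $2$ solutions when $\delta$ is a residue and none otherwise — matching $\chi(p^k)+\chi(p^{k-1})$, which equals $2$ if $\chi(p)=1$ and $0$ if $\chi(p)=-1$. If $p$ is odd and $p\mid\delta$, squarefreeness gives $p\mid\delta$ but $p^2\nmid\delta$, hence $\chi(p)=\Leg{4\delta}{p}=0$; a solution of $x^2\equiv\delta\pmod{p^k}$ forces $p\mid x$, and writing $x=py$ one checks that for $k\ge 2$ the congruence would force $p\mid\delta/p$, which is impossible, while for $k=1$ only $x\equiv 0$ works. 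Thus $\rho(p)=1=\chi(p)+\chi(1)$ and $\rho(p^k)=0$ for $k\ge 2$. Finally, for $p=2$: since $\delta$ is squarefree and $\delta\not\equiv 1\pmod 4$ we have $\delta\equiv 2$ or $3\pmod 4$, and because the top entry $4\delta$ is even we get $\chi(2)=0$; a direct computation of squares modulo $2,4,8$ then gives $\rho(2)=1$ and $\rho(2^k)=0$ for $k\ge 2$ (if $\delta\equiv 3\pmod 4$ an odd square is $\equiv 1\pmod 8$; if $\delta\equiv 2\pmod 4$ a solution forces $2\mid x$, and then $4\mid\delta$, impossible once $k\ge 2$), again matching $\chi(2^k)+\chi(2^{k-1})$.

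I expect the case $p=2$ to be the main obstacle, although it is mostly careful bookkeeping: one has to respect the Kronecker-symbol conventions at the prime $2$ — in particular the fact that $4\delta$ being even forces $\chi(2)=0$, so there is no nontrivial $2$-adic contribution — and the hypothesis $\delta\not\equiv 1\pmod 4$ is exactly what rules out odd square solutions modulo higher powers of $2$ (without it the identity as stated would fail). The odd-prime cases are by contrast routine applications of quadratic residues and Hensel lifting.
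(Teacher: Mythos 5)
Your proof is correct and matches the approach the paper attributes to its cited source ([\cite{lapkova}, Lemma 2.1]): reduce to prime powers via multiplicativity (after completing the square to get $\rho(d)=\#\{x\bmod d: x^2\equiv\delta\}$) and verify the local identity $\rho(p^k)=\chi(p^k)+\chi(p^{k-1})$ case by case, with the hypotheses ($\delta$ squarefree, $\delta\not\equiv 1\pmod 4$) doing exactly the work you identify at ramified primes and at $p=2$. The only cosmetic difference is that the paper describes the computation as a manipulation of the Dirichlet series generated by $\rho$ (i.e.\ an Euler-product identity), whereas you verify the convolution identity $\rho=\mu^2*\chi$ directly; the mathematical content is the same.
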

The proof of Lemma \ref{lem1} is based on elementary facts about the function $\rho(d)$ at prime powers and manipulations of the Dirichlet series it generates.

\par Further we need the following explicit estimates.
 
\begin{lem}\label{lem:muQexpl} For any integer $N\geq 1$ we have 
$$\sum_{n\leq N}\mu^2(n)=\frac{N}{\zeta(2)}+E_1(N),$$
where $|E_1(N)|\leq \sqrt{3}\left(1-\frac 1{\zeta(2)}\right)\sqrt{N}<0.6793\sqrt{N}$.
\end{lem}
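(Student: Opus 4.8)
\emph{Proof sketch (plan).} The plan is to start from the elementary Dirichlet--convolution identity $\mu^2(n)=\sum_{d^2\mid n}\mu(d)$ (equivalently $\sum_n\mu^2(n)n^{-s}=\zeta(s)/\zeta(2s)$), which yields the exact formula $\sum_{n\le N}\mu^2(n)=\sum_{d\le\sqrt N}\mu(d)\lfloor N/d^2\rfloor$. Writing $\lfloor N/d^2\rfloor=N/d^2-\{N/d^2\}$, using $\sum_{d\ge1}\mu(d)/d^2=1/\zeta(2)$, and observing that for $d>\sqrt N$ one has $0<N/d^2<1$ so that $\{N/d^2\}=N/d^2$, all the tails combine and I obtain the clean closed form $E_1(N)=-\sum_{d\ge1}\mu(d)\{N/d^2\}$, in which the term $d=1$ vanishes.

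From here the task is purely to estimate $\left|\sum_{d\ge2}\mu(d)\{N/d^2\}\right|$. First I would record the easy bound from $0\le\{t\}\le\min(t,1)$: separating at $d=\lfloor\sqrt N\rfloor$, the tail $\sum_{d>\sqrt N}\mu^2(d)N/d^2$ is controlled by comparing $\sum_{d>\sqrt N}d^{-2}$ with $\int dt/t^2$ (keeping careful track of the floor), while the head $\sum_{2\le d\le\sqrt N}\mu^2(d)\{N/d^2\}$ has at most $Q(\sqrt N)-1$ nonzero terms. To reach the sharp constant, instead of bounding the head by its mere number of terms I would split it according to the sign of $\mu(d)$, so that only the count of \emph{squarefree} integers in $[2,\sqrt N]$ appears on one side, and $\sqrt N$ minus that count (i.e.\ the count of non-squarefree integers, of size $\sqrt N(1-1/\zeta(2))$ up to a lower-order term) appears on the other; this is exactly where the factor $1-1/\zeta(2)$ is produced, and optimizing the bookkeeping yields $|E_1(N)|\le\sqrt3\,(1-1/\zeta(2))\sqrt N$ once $N$ exceeds an explicit threshold $N_0$.

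For $1\le N\le N_0$ I would verify the inequality by direct computation of $\sum_{n\le N}\mu^2(n)$; here the ratio $|E_1(N)|/\sqrt N$ attains its maximum at $N=3$, where $E_1(3)=3-3/\zeta(2)=3(1-1/\zeta(2))$ gives precisely $E_1(3)/\sqrt3=\sqrt3\,(1-1/\zeta(2))$, so the stated constant is sharp and cannot be lowered. Finally, $\sqrt3\,(1-1/\zeta(2))=\sqrt3\,(1-6/\pi^2)<0.6793$ is a one-line numerical check. The delicate point, and the main obstacle, is the estimation in the second step: because the tail $N\sum_{d>\sqrt N}\mu(d)/d^2$ and the fractional-part head each have trivial size of order $\sqrt N$, squeezing the combined quantity down to $\sqrt3\,(1-1/\zeta(2))$ requires handling them together and tightly, and it is cleanest to absorb the remaining borderline small values of $N$ into the finite check.
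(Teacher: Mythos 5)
The paper itself does not prove this lemma: it is quoted verbatim as inequality (10) of Moser and MacLeod \cite{moser}. Your opening reduction is correct and is indeed how that proof begins: from $\mu^2(n)=\sum_{d^2\mid n}\mu(d)$ one gets $\sum_{n\le N}\mu^2(n)=\sum_{d\le\sqrt N}\mu(d)\lfloor N/d^2\rfloor$, and absorbing the tail $N\sum_{d>\sqrt N}\mu(d)d^{-2}$ into the fractional parts gives the clean identity $E_1(N)=-\sum_{d\ge 2}\mu(d)\{N/d^2\}$. Your observation that $N=3$ gives equality, $E_1(3)=3(1-1/\zeta(2))=\sqrt3\,(1-1/\zeta(2))\sqrt3$, is also correct and correctly identifies where the constant comes from, as is the final numerical check $\sqrt3\,(1-6/\pi^2)=0.6790\ldots<0.6793$.

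The gap is that the only step carrying any real content is not carried out: you never establish $\bigl|\sum_{d\ge2}\mu(d)\{N/d^2\}\bigr|\le\sqrt3\,(1-1/\zeta(2))\sqrt N$ for $N\ge N_0$, you assert that ``optimizing the bookkeeping yields'' it. Moreover, the mechanism you sketch for producing the factor $1-1/\zeta(2)$ does not work as described. The integers $d$ with $\mu(d)=0$ contribute nothing to $\sum_d\mu(d)\{N/d^2\}$, so the count of non-squarefree $d\le\sqrt N$ (which is what has density $1-1/\zeta(2)$) cannot appear as ``one side'' of a split by the sign of $\mu(d)$. What a sign split actually requires is the count of $d\le\sqrt N$ with $\mu(d)=+1$ (resp. $-1$), which equals $\tfrac12\bigl(Q(\sqrt N)\pm M(\sqrt N)\bigr)$ with $M$ the Mertens function; turning its heuristic density $3/\pi^2$ into an explicit inequality valid from some explicit $N_0$ on requires explicit bounds on $M$, a substantial input you do not mention. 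Since crude bounds on the head and tail are each already of size $\asymp\sqrt N$ and the target constant $0.679$ leaves essentially no slack, this ``delicate point'' (your words) is the entire proof, and it is missing. Either carry out the Moser--MacLeod cancellation argument in full, with an explicit $N_0$ and the finite verification below it, or simply cite their inequality (10) as the paper does; as written the proposal does not prove the lemma.
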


\begin{proof} This is inequality (10) from Moser and MacLeod \cite{moser}.
%
 \end{proof}
The following numerically explicit P\'{o}lya-Vinogradov inequality is essentially proven by Frolenkov and Soundararajan \cite{frolenkov}. It supercedes the main result of Pomerance \cite{pomerance}.
\begin{lem}\label{lem:pomerance} Let $$M_\chi:=\max_{L,P}\left|\sum_{n=L}^P \chi(n)\right|$$ for a primitive character $\chi$ to the modulus $q>1$. Then
\begin{equation*}
M_\chi\leq\left\lbrace
\begin{array}{ll}
\frac{2}{\pi^2}q^{1/2}\log{q}+0.9467q^{1/2} +1.668\,, & \chi \text{ even;}\\
 & \\
\frac{1}{2\pi}q^{1/2}\log{q}+0.8204q^{1/2}+1.0285,&\chi \text{  odd. }
\end{array}\right.
\end{equation*} 
\end{lem}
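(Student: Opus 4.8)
The plan is to derive the stated inequality from the quantitative Pólya–Vinogradov estimate of Frolenkov and Soundararajan \cite{frolenkov}, with Pomerance's explicit bound \cite{pomerance} used to cover the small conductors. First I would record the relevant statement from \cite{frolenkov}: for a primitive character $\chi$ modulo $q$ with $q$ above an explicit threshold $q_0$, and for \emph{every} interval $[L,P]$,
$$\left|\sum_{n=L}^{P}\chi(n)\right|\leq \tfrac{2}{\pi^2}q^{1/2}\log q+c_e\,q^{1/2}\quad(\chi\text{ even}),\qquad \left|\sum_{n=L}^{P}\chi(n)\right|\leq \tfrac{1}{2\pi}q^{1/2}\log q+c_o\,q^{1/2}\quad(\chi\text{ odd}),$$
with explicit $c_e\le 0.9467$ and $c_o\le 0.8204$. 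Since this bounds an arbitrary incomplete sum, taking the supremum over $L\le P$ bounds $M_\chi$ directly, so the asserted inequality holds (with the additive constants $1.668$, $1.0285$ to spare) for all $q>q_0$. If \cite{frolenkov} in fact gives a bound uniform in $q>1$ for arbitrary intervals, the proof is essentially this transcription, together with the elementary remark that the bound there applies to every interval and hence to $M_\chi$.

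Otherwise it remains to handle the conductors $q\le q_0$. For these I would invoke Pomerance's explicit Pólya–Vinogradov inequality \cite{pomerance}, which is valid for all $q>1$ and has the same leading constants $\tfrac{2}{\pi^2}$ (even) and $\tfrac{1}{2\pi}$ (odd) on $q^{1/2}\log q$, but a secondary term roughly of the shape $(A\log\log q)q^{1/2}+B\,q^{1/2}+C$. On the finite range $q\le q_0$ the quantity $\log\log q$ is bounded by $\log\log q_0$, so this secondary term is at most $0.9467\,q^{1/2}+1.668$ (resp. $0.8204\,q^{1/2}+1.0285$); that is exactly how the additive constants $1.668$ and $1.0285$, and the precise coefficients $0.9467$ and $0.8204$, arise. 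Combining the two ranges yields the stated bound for all primitive $\chi$ modulo $q>1$.

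The work is organizational rather than conceptual: one must fix the precise threshold and secondary constants of \cite{frolenkov} and \cite{pomerance}, keep the even/odd dichotomy and the constants $\tfrac{2}{\pi^2}$ versus $\tfrac{1}{2\pi}$ straight throughout, and tune the remaining constants so that a single clean inequality dominates both the Frolenkov–Soundararajan range and the Pomerance range. The main obstacle is this calibration — in particular checking that the bounded $\log\log q$ contribution on $q\le q_0$, plus the lower-order terms of \cite{pomerance}, really do fit under $0.9467\,q^{1/2}+1.668$ — together with the care needed to reconcile the formulations of \cite{frolenkov} and \cite{pomerance} with the maximum over all intervals that defines $M_\chi$.
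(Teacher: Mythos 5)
Your treatment of the large-modulus range matches the paper: the stated constants $\frac{2}{\pi^2},0.9467,1.668$ (even) and $\frac{1}{2\pi},0.8204,1.0285$ (odd) are exactly the intermediate bounds occurring in the proof of Theorem 2 of Frolenkov--Soundararajan, valid once their parameter $L$ (equal to $\left[\pi^2\sqrt{q}/4+9.15\right]$, resp.\ $\left[\pi\sqrt{q}+9.15\right]$) satisfies $1\leq L\leq q$, which happens for $q>25$. So the constants are read off from \cite{frolenkov}, not reverse-engineered from Pomerance as you suggest. Up to fixing that threshold, this half of your argument is the paper's argument.

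The genuine gap is in your treatment of $q\leq q_0$. You propose to dominate Pomerance's bound by the stated one on this finite range by bounding $\log\log q$ by $\log\log q_0$, but this calibration provably fails. Pomerance's secondary term for even primitive $\chi$ is $\frac{4}{\pi^2}\sqrt{q}\log\log q+\frac{3}{2}\sqrt{q}$, and already the piece $\frac{3}{2}\sqrt{q}$ alone exceeds $0.9467\sqrt{q}+1.668$ as soon as $0.5533\sqrt{q}>1.668$, i.e.\ for all $q>10$; adding the $\log\log q$ contribution makes the discrepancy worse (at $q=25$ one gets about $9.87\sqrt{q}/\sqrt{q}\cdot\sqrt{q}\approx 9.87$ versus the required $6.40$). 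The odd case fails similarly near the top of the range. This is consistent with the paper's own remark that the lemma \emph{supersedes} Pomerance's theorem: a strictly stronger inequality cannot be deduced from the weaker one on the problematic range. The paper closes the gap differently, by comparing the right-hand sides directly with the exact computed values of $M_\chi$ for every primitive character of modulus $q\leq 25$ (the Bober--Goldmakher data used in \cite{pomerance}, supplied by Goldmakher). So for small $q$ you need a finite numerical verification of $M_\chi$ itself, not an appeal to Pomerance's inequality.
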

\begin{proof}
Both inequlities for $M_\chi$ are shown to hold by Frolenkov and Soundararajan in the course of the proof of their Theorem 2 \cite{frolenkov} as long as a certain parameter $L$ satisfies $1\leq L\leq q$ and $L=\left[\pi^2/4\sqrt{q}+9.15\right]$ for $\chi$ even, $L=\left[\pi\sqrt{q}+9.15\right]$ for $\chi$ odd. Thus both inequalities for $M_\chi$ hold when $q>25$. \\
Then we have a look of the maximal possible values of $M_\chi$ when $q\leq 25$ from a data sheet, which was kindly provided by Leo Goldmakher. It represents the same computations of Bober and Goldmakher used by Pomerance \cite{pomerance}. We see that the right-hand side of the bounds of Frolenkov-Soundararajan for any $q\leq 25$ is larger than the maximal value of $M_\chi$ for any primitive Dirichlet character $\chi$ of modulus $q$. This proves the lemma.   
\end{proof}
The next lemma is critical for obtaining the right main term in the explicit upper bound of Theorem \ref{thm2}.
\begin{lem}\label{lem:chiexpl} For any integer $N\geq 1$ we have 
$$\displaystyle\sum_{n\leq N}\frac{\chi(n)}{n}=L(1,\chi)+E_2(N),$$
where $\left|E_2(N)\right|\leq 2M_\delta/N$ and $M_\delta$ is the constant defined in Theorem \ref{thm2}.
\end{lem}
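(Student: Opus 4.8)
The plan is to write $E_2(N)=L(1,\chi)-\sum_{n\le N}\chi(n)/n$ as a tail of the $L$-series, estimate that tail by partial summation, and feed in the explicit P\'olya--Vinogradov bound of Lemma \ref{lem:pomerance}.

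First I would record that $\chi(n)=\Leg{4\delta}{n}$ is a \emph{primitive} Dirichlet character. Since $\delta$ is square-free with $\delta\not\equiv 1\pmod 4$ and $\delta\ne 0$, we have $\delta\equiv 2$ or $3\pmod 4$, so $4\delta$ is a fundamental discriminant and $\chi$ is primitive to the modulus $q=4|\delta|\ge 4>1$; moreover $\chi$ is even exactly when $\delta>0$ and odd exactly when $\delta<0$. Substituting $q=4|\delta|$ into Lemma \ref{lem:pomerance} and simplifying (e.g. $(4\delta)^{1/2}\log(4\delta)=2\delta^{1/2}\log(4\delta)$ and $2\cdot 0.9467=1.8934$, and similarly in the odd case) one checks that the right-hand sides become precisely the constants in the definition of $M_\delta$ in Theorem \ref{thm2}. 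Hence $M_\chi:=\max_{L,P}\bigl|\sum_{n=L}^{P}\chi(n)\bigr|\le M_\delta$.

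Next, set $B(t)=\sum_{n\le t}\chi(n)$, so that $|B(t)|\le M_\chi\le M_\delta$ for every $t\ge 1$. Abel summation gives $\sum_{n\le N}\chi(n)/n=B(N)/N+\int_1^N B(t)t^{-2}\,dt$; letting $N\to\infty$, the boundary term tends to $0$ because $B$ is bounded (this also shows the series converges), so $L(1,\chi)=\int_1^\infty B(t)t^{-2}\,dt$. Subtracting the two identities,
$$E_2(N)=\int_N^\infty\frac{B(t)}{t^2}\,dt-\frac{B(N)}{N},$$
whence $|E_2(N)|\le M_\chi\int_N^\infty t^{-2}\,dt+M_\chi/N=2M_\chi/N\le 2M_\delta/N$, valid for every integer $N\ge 1$.

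I do not expect a genuine obstacle here. The only points requiring care are the bookkeeping that confirms $4\delta$ is a fundamental discriminant of the correct parity, so that Lemma \ref{lem:pomerance} applies and the constant $M_\delta$ emerges exactly as stated, and noting that the asserted bound is of course still (trivially) true for the small values of $N$ below the conductor, where $2M_\delta/N$ is large.
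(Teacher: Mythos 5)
Your proposal is correct and follows essentially the same route as the paper: Abel summation applied to the tail of the $L$-series, with the partial sums of $\chi$ bounded by $M_\chi\le M_\delta$ via the Frolenkov--Soundararajan inequality of Lemma \ref{lem:pomerance} after observing that $4\delta$ is a fundamental discriminant so $\chi$ is primitive of conductor $4|\delta|$. The only cosmetic difference is that you Abel-sum the full partial sum and subtract the limit, while the paper sums the segment $N<n\le Z$ directly and lets $Z\to\infty$; also note your $E_2(N)$ carries the opposite sign to the paper's convention, which is harmless since only $|E_2(N)|$ is bounded.
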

\begin{proof} This statement, especially the estimate of the error term, is much less trivial and follows from the effective P\'{o}lya-Vinogradov inequality of Frolenkov-Soundararajan. 
First we notice that 
$$\sum_{n\leq N}\frac{\chi(n)}{n}=\sum_{n=1}^\infty\frac{\chi(n)}{n}-\sum_{n>N}\frac{\chi(n)}{n}=L(1,\chi)+E_2(N).$$
Let us denote 
$X(N):=\sum_{1\leq n\leq N}\chi(n)$ for any positive integer $N$. Then by Abel's summation it follows that for any positive integer $Z>N$ we have
\begin{align*}
\left|\sum_{N<n\leq Z}\frac{\chi(n)}{n}\right|&=\left|\frac{X(Z)}{Z}-\frac{X(N)}{N}+\int_N^Z\frac{X(t)}{t^2}dt\right|\\
&\leq \frac{\left|X(Z)\right|}{Z}+\frac{\left|X(N)\right|}{N}+\int_N^Z\frac{\left|X(t)\right|}{t^2}dt .
\end{align*}  
Recall the definition of the quantity $M_\chi$ in Lemma \ref{lem:pomerance}. Then 
$$\left|\sum_{N<n\leq Z}\frac{\chi(n)}{n}\right|\leq M_\chi\left(\frac 1 Z+\frac 1 N+\int_N^Z\frac{dt}{t^2}\right)=\frac{2M_\chi}{N},$$
which is uniform in $Z$. Therefore $|E_2(N)|=\left|\sum_{n>N}\chi(n)/n\right|\leq 2M_\chi/N$. Now by the conditions on $\delta$ to be square-free and not congruent to $1$ modulo $4$ it follows that the discriminant $4\delta$ is fundamental, thus the character $\chi(n)=\Leg{4\delta}{n}$ is primitive with a conductor $4|\delta|$. Then according to Lemma \ref{lem:pomerance} $M_\chi\leq M_\delta$ and this proves the statement. \end{proof}

The following lemma is due to Ramar\'e. One could easily extract the right main term from Lemma \ref{lem:muQexpl} through Abel summation but the estimate of the minor term requires computer calculations.
 
\begin{lem}[\cite{ramare}, Lemma 3.4]\label{lem:ramare} Let $x\geq 1$ be a real number.We have 
$$ \sum_{n\leq x}\frac{\mu^2(n)}{n}\leq \frac{\log x}{\zeta(2)}+1.166.$$
\end{lem}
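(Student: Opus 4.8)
The plan is to derive this weighted sum from the unweighted count of squarefree integers already made explicit in Lemma \ref{lem:muQexpl}, and then spend the real effort on the constant. Writing $Q(t):=\sum_{n\le t}\mu^2(n)=t/\zeta(2)+E_1(t)$ with $|E_1(t)|\le 0.6793\sqrt{t}$, partial summation against the weight $1/n$ gives
\begin{equation*}
\sum_{n\le x}\frac{\mu^2(n)}{n}=\frac{Q(x)}{x}+\int_1^x\frac{Q(t)}{t^2}\,dt=\frac{\log x}{\zeta(2)}+\frac{1}{\zeta(2)}+\frac{E_1(x)}{x}+\int_1^x\frac{E_1(t)}{t^2}\,dt.
\end{equation*}
This isolates the correct main term $\log x/\zeta(2)$ immediately, so the whole problem collapses to showing that the constant-and-error block
\begin{equation*}
C(x):=\frac{1}{\zeta(2)}+\frac{E_1(x)}{x}+\int_1^x\frac{E_1(t)}{t^2}\,dt
\end{equation*}
never exceeds $1.166$.

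Next I would pin down the limiting value $C:=\lim_{x\to\infty}C(x)=\frac{1}{\zeta(2)}+\int_1^\infty E_1(t)t^{-2}\,dt$ in closed form rather than from the size of $E_1$. Through the Dirichlet series $\sum_n\mu^2(n)n^{-s}=\zeta(s)/\zeta(2s)$, expanding about $s=1$ (equivalently matching the Abel expansion above against the residue of $\zeta(s)/\zeta(2s)\,x^{s-1}/(s-1)$) identifies
\begin{equation*}
C=\frac{\gamma}{\zeta(2)}-\frac{2\zeta'(2)}{\zeta(2)^2}\approx 1.0439.
\end{equation*}
The same constant falls out of the purely elementary route in which one writes $\mu^2(n)=\sum_{d^2\mid n}\mu(d)$, substitutes $n=d^2m$, applies $\sum_{m\le y}1/m\le\log y+\gamma+1/(2y)$, and completes the sums $\sum_d\mu(d)/d^2=1/\zeta(2)$ and $\sum_d\mu(d)\log d/d^2=\zeta'(2)/\zeta(2)^2$ to infinity; either derivation is routine.

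The genuinely delicate step is the \emph{uniform} control of $C(x)-C$ for finite $x$, and I expect this to be the main obstacle — indeed it is the ``computer calculations'' flagged in the remark preceding the lemma. The crude bound $|E_1(t)|\le 0.6793\sqrt{t}$ is not good enough by itself: it only yields $\left|\int_x^\infty E_1(t)t^{-2}\,dt\right|\le 1.3586/\sqrt{x}$, a tail that swamps the available slack $1.166-C\approx 0.122$ unless $x$ is already fairly large. The clean resolution is a two-range argument. For $x\ge x_0$, where $x_0$ is chosen (around a few hundred) so that $|E_1(x)/x|+\bigl|\int_x^\infty E_1(t)t^{-2}\,dt\bigr|\le 3\cdot 0.6793/\sqrt{x}\le 0.122$, the worst-case bound already forces $C(x)\le C+0.122\le 1.166$. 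For the finitely many $x<x_0$ one checks $\sum_{n\le x}\mu^2(n)/n\le\log x/\zeta(2)+1.166$ directly: both sides only change at integers, so it suffices to verify the inequality at $x=1,2,\dots,x_0-1$, a finite computation. Balancing $x_0$ against the narrow slack $0.122$ — the bound is essentially tight for this method — is precisely where the care is needed, whereas the extraction of the main term and the closed form for $C$ are straightforward.
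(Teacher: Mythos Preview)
The paper does not supply its own proof of this lemma: it is quoted verbatim from Ramar\'e, and the surrounding remark only sketches that Abel summation on Lemma~\ref{lem:muQexpl} recovers the main term while ``the estimate of the minor term requires computer calculations.'' Your proposal is exactly a correct fleshing-out of that sketch: the partial-summation identity, the identification of the limiting constant $C=\gamma/\zeta(2)-2\zeta'(2)/\zeta(2)^2\approx 1.0439$, and the split into a tail bound for $x\ge x_0$ (your arithmetic $3\cdot 0.6793/\sqrt{x}\le 0.122$ forces roughly $x_0\approx 280$) plus a finite check for $x<x_0$ are all sound. There is nothing to compare against beyond that one-line remark, and your execution matches it.
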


 
Let us start with the proof of Theorem \ref{thm2}. Note that from the condition $f(N)\geq f(1)$ and the convexity of the function $f(x)$ we have $f(N)=\max_{1\leq n\leq N}f(n)$. Using the Dirichlet hyperbola method we have
\begin{align*}
\sum_{1\leq n\leq N}\tau(f(n))&=\sum_{1\leq n\leq N}\sum_{d\mid f(n)}1\leq 2\sum_{1\leq n\leq N}\sum_{\substack{d\leq\sqrt{f(n)}\\d\mid f(n)}}1\\
&=2\sum_{1\leq d\leq \sqrt{f(N)}}\sum_{\substack{1\leq n\leq N\\ d+\Oc(1)\leq n}}\sum_{d\mid f(n)}1\leq 2\sum_{1\leq d\leq \sqrt{f(N)}}\sum_{\substack{1\leq n\leq N\\d\mid f(n)}}1.
\end{align*} 
Recall the definition \eqref{def:rho} of the function $\rho(d)$. Then the innermost sum equals $\left[N/d\right]$ copies of $\rho(d)$ plus a remaining part smaller than $\rho(d)$. Recall that $X=\sqrt{f(N)}$. Then 
\begin{equation}\label{eq:tau_rho}
\sum_{1\leq n\leq N}\tau(f(n))\leq 2\sum_{1\leq d\leq X}\left(\frac N d \rho(d)+\rho(d)\right)=2N\sum_{1\leq d\leq X}\frac{\rho(d)}d +2\sum_{1\leq d\leq X}\rho(d).
\end{equation}
From the convolution identity stated in Lemma \ref{lem1} it immediately follows that
$$\sum_{d\leq X}\rho(d)=\sum_{lm\leq X}\mu^2(l)\chi(m)=\sum_{l\leq X}\mu^2(l)\sum_{m\leq X/l}\chi(m).$$
Recall the definition of $M_\chi$ and the Frolenkov-Soundararajan bounds $M_\chi\leq M_\delta$. Then using also Lemma \ref{lem:muQexpl} we get
\begin{equation}\label{eq:rho_sum}
\sum_{d\leq X}\rho(d)\leq M_\delta\sum_{l\leq X}\mu^2(l)\leq M_\delta\left(\frac{X}{\zeta(2)}+\sqrt{3}\left(1-\frac 1{\zeta(2)}\right)\sqrt{X}\right).
\end{equation}
Similarly by the convolution property of the function $\rho(d)$ and Lemma \ref{lem:chiexpl} we can write
\begin{equation}\label{eq:rho_d_sum}
\sum_{d\leq X}\frac{\rho(d)}{d}=\sum_{l\leq X}\frac{\mu^2(l)}{l}\sum_{m\leq X/l}\frac{\chi(m)}{m}\leq \sum_{l\leq X}\frac{\mu^2(l)}{l}\left(L(1,\chi)+2M_\delta\frac{l}{X}\right).
\end{equation}
One could go further using Lemma \ref{lem:ramare} and again Lemma \ref{lem:muQexpl}. We obtain
\begin{align}\label{eq:rhod_sum}
\sum_{d\leq X}\frac{\rho(d)}{d}&\leq L(1,\chi)\sum_{l\leq X}\frac{\mu^2(l)}{l}+\frac{2M_\delta}{X}\sum_{l\leq X}\mu^2(l)\nonumber\\
&\leq L(1,\chi)\left(\frac{\log X}{\zeta(2)}+1.166\right)+\frac{2M_\delta}{X}\left(\frac{X}{\zeta(2)}+\sqrt{3}\left(1-\frac 1{\zeta(2)}\right)\sqrt{X}\right).
\end{align}
Plugging the estimates \eqref{eq:rho_sum} and \eqref{eq:rhod_sum} in \eqref{eq:tau_rho} gives the statement of Theorem \ref{thm2}.

\par Let us prove Corollary \ref{cor1}. For the polynomial $f(n)=n^2+1$ we can be more precise as the condition $d\leq \sqrt{f(n)}$ is equivalent to $d\leq n$. Then after applying more carefully the Dirichlet hyperbola method and changing the order of summation we obtain
\begin{align*}
\sum_{n\leq N}\tau(n^2+1)&=2\sum_{n\leq N}
\sum_{\substack{d\leq n\\d\mid n^2+1}}1=2\sum_{1\leq d\leq N}\sum_{\substack{d\leq n\leq N\\d\mid n^2+1}}1\\
&=2\sum_{1\leq d\leq N}\left(\sum_{\substack{1\leq n\leq N\\d\mid n^2+1}}1-\sum_{\substack{1\leq n< d\\d\mid n^2+1}}1\right)\leq 2N\sum_{1\leq d\leq N}\frac{\rho(d)}{d}.
\end{align*}
Similarly to \eqref{eq:rho_d_sum} we have
$$\sum_{1\leq d\leq N}\frac{\rho(d)}{d}=\sum_{l\leq N}\frac{\mu^2(l)}{l}\sum_{m\leq N/l}\frac{\chi(m)}{m}\leq \sum_{l\leq N}\frac{\mu^2(l)}{l}\left(L(1,\chi)+\frac{l}{N}\right)$$
becuase we can see that in this case $\left|\sum_{n>N}\chi(n)\right|\leq 1/N$. Furthermore $L(1,\chi)=\pi/4$ and application of Lemma \ref{lem:ramare} and Lemma \ref{lem:muQexpl} gives 
\begin{align*}
\sum_{1\leq d\leq N}\frac{\rho(d)}{d}&\leq \frac{\pi}{4}\sum_{l\leq N}\frac{\mu^2(l)}{l}+\frac{1}{N}\sum_{l\leq N}\mu^2(l)\\
&\leq \frac{\pi}{4}\left(\frac{\log N}{\zeta(2)}+1.166\right)+\frac{1}{N}\left(\frac{N}{\zeta(2)}+\sqrt 3\left(1-\frac{1}{\zeta(2)}\right)\sqrt N\right).\end{align*}
After numerical approximation of the constants we conclude the validity of Corollary \ref{cor1}.

\section{Proof of Theorem \ref{thm1}}

Let again $f(n)=n^2+2bn+c$. We can assume that $f(n)$ is non-negative for $n\geq 1$ as for $N$ large enough $f(N)=\max_{1\leq n\leq N}f(n)$ and the contribution of the finitely many integers $n$ for which $f(n)<0$ will be negligible compared to the main term.  
\par As in the proof of Theorem \ref{thm2} and using \eqref{eq:tau_rho} and \eqref{eq:rho_sum} it is easy to see that
\begin{align}\label{eq:Trho}
\sum_{n\leq N}\tau(f(n))&=2N\sum_{d\leq N}\frac{\rho(d)}{d}+\Oc\left(\sum_{d\leq N}\rho(d)\right)+\Oc(N)\nonumber\\&=2N\sum_{d\leq N}\frac{\rho(d)}{d}+\Oc(N).
\end{align} 

We need the following estimates.

\begin{lem}\label{lem:asympt}For $N\rightarrow\infty$ we have the asymptotic formulae
$$\sum_{n\leq N}\frac{\mu^2(n)}{n}=\frac{\log N}{\zeta(2)}+\Oc(1)$$
and
 $$\displaystyle\sum_{n\leq N}\frac{\chi(n)}{n}=L(1,\chi)+\Oc(1/N).$$
\end{lem}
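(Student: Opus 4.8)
The plan is to prove the two asymptotic formulae in Lemma \ref{lem:asympt} as weakened, soft versions of the explicit estimates already established in Section \ref{sec:2}; both follow by simply discarding the explicit constants in the error terms.

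\medskip

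For the first formula, I would invoke Lemma \ref{lem:ramare}, which gives the upper bound $\sum_{n\leq N}\mu^2(n)/n\leq \log N/\zeta(2)+1.166$. For the matching lower bound it suffices to perform Abel summation on the explicit estimate of Lemma \ref{lem:muQexpl}, namely $\sum_{n\leq N}\mu^2(n)=N/\zeta(2)+E_1(N)$ with $|E_1(N)|\leq 0.6793\sqrt{N}$: writing $\sum_{n\leq N}\mu^2(n)/n$ via partial summation against $1/t$ turns the main term $N/\zeta(2)$ into $\log N/\zeta(2)+\Oc(1)$ and the error term $E_1(t)=\Oc(\sqrt{t})$ into a convergent contribution $\int_1^N \Oc(t^{-3/2})\,dt+\Oc(N^{-1/2})=\Oc(1)$. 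Combining the two bounds yields $\sum_{n\leq N}\mu^2(n)/n=\log N/\zeta(2)+\Oc(1)$.

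\medskip

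For the second formula there is essentially nothing new to do: Lemma \ref{lem:chiexpl} already states $\sum_{n\leq N}\chi(n)/n=L(1,\chi)+E_2(N)$ with $|E_2(N)|\leq 2M_\delta/N$, and since $M_\delta$ is a constant depending only on $\delta$ (hence on $b$ and $c$), this is precisely $L(1,\chi)+\Oc(1/N)$. One could alternatively note that the series $\sum_n\chi(n)/n$ converges (as $\chi$ is a non-principal character), so the tail is $\Oc(1/N)$ by Abel summation against the bounded partial sums $X(N)=\sum_{n\leq N}\chi(n)$, without even needing the Pólya–Vinogradov input.

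\medskip

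There is no real obstacle here; the only minor point of care is the Abel summation step for the lower bound in the first formula, where one must check that the boundary and integral terms coming from $E_1$ are genuinely $\Oc(1)$ and do not secretly contribute to the main term. Since $|E_1(t)|\ll \sqrt{t}$ this is immediate. With Lemma \ref{lem:asympt} in hand, substituting the convolution identity $\rho(d)=\sum_{lm=d}\mu^2(l)\chi(m)$ into \eqref{eq:Trho} exactly as in \eqref{eq:rho_d_sum}–\eqref{eq:rhod_sum} but with $\Oc$-error terms gives $\sum_{d\leq N}\rho(d)/d=L(1,\chi)\log N/\zeta(2)+\Oc(1)$, and hence $\sum_{n\leq N}\tau(f(n))=2L(1,\chi)N\log N/\zeta(2)+\Oc(N)$, which is Theorem \ref{thm1}.
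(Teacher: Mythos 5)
Your proof is correct and follows essentially the same route as the paper: Abel summation applied to the explicit estimate of Lemma \ref{lem:muQexpl} for the first formula, and a direct appeal to Lemma \ref{lem:chiexpl} for the second. The only (harmless) redundancy is invoking Lemma \ref{lem:ramare} for an upper bound, since the Abel summation already yields the asymptotic with an $\Oc(1)$ error in both directions.
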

\begin{proof}
The first asymptotic formula follows easily after Abel transformation of the classical formula
$$\sum_{n\leq N}\mu^2(n)=\frac{N}{\zeta(2)}+\Oc(\sqrt N),$$
which in its turn obviously follows from the explicit version in Lemma \ref{lem:muQexpl}. 
The second statement can be deduced from Lemma \ref{lem:chiexpl}. 
%
\end{proof}

\par Then the asymptotic estimate corresponding to \eqref{eq:rho_d_sum} is
\begin{align}\label{eq:sumrhofrac}\sum_{d\leq N}\frac{\rho(d)}{d}&=\sum_{l\leq N}\frac{\mu^2(l)}{l}\left(L(1,\chi)+\Oc(l/N)\right)=L(1,\chi)\sum_{l\leq N}\frac{\mu^2(l)}{l}\nonumber\\
&+\Oc\left(\frac 1 N\sum_{l\leq N}\mu^2(l)\right)=L(1,\chi)\left(\frac{\log N}{\zeta(2)}+\Oc(1)\right)+\Oc(1)\nonumber\\
&=\frac{L(1,\chi)}{\zeta(2)}\log N+\Oc(1).
\end{align}

Now plugging \eqref{eq:sumrhofrac} into \eqref{eq:Trho} gives Theorem \ref{thm1}.

\section{Some examples}
In our preceding paper \cite{lapkova} we gave several examples of polynomials for which Theorem \ref{thm2} can provide effective upper bound of the average divisor sum. There we compared the coefficient of the main term in the asymptotic formula of McKee with the one from our upper bound. The current paper achieves equality of the two coefficients but there is one more computation to be made, so that our result could be useful numerically, this of the special value of the Dirichlet $L$-function $L(s,\chi)$ at $s=1$. 
 
\par Formulae for computing $L(1,\chi)$ in the two separate cases $\delta<0$ and $\delta>0$ are given in [\cite{washington}, Theorem 4.9]. This is implemented by the \texttt{SAGE} function \texttt{quadratic\_L\_function\_exact$(1,4\delta)$} when $\delta<0$ and its values are computed for several examples in the table below.
 When $\delta>0$ one can find the approximate value of $L(1,\chi)$ by finding first the corresponding Dirichlet character (of modulus $4\delta$, order $2$, even and primitive) in the \textit{Refine search} platform from LMFDB (The $L$-functions and modular forms database \cite{LMFDB}).
 \par The following table lists some examples.

 \[ 
\def\arraystretch{1.7}
\arraycolsep=2.5pt
\begin{array}{| c | c | c | c | }
    \hline
    f(n) & \delta & \chi & L(1,\chi) \\ \hline
    n^2+1 &-1 &\Leg{-1}{.} & \pi/4\sim 0.7854 \\ \hline
    n^2+10n+27 & -2& \Leg{-8}{.}  & \sqrt 2/4\pi\sim 1.1108\\    \hline
    n^2+4n+10 & -6 & \Leg{-24}{.} & \sqrt 6/6\pi\sim 1.2826 \\    \hline
    n^2+10n-10 & 35 & \Leg{140}{.} & \sim 0.8377 \\ \hline
 n^2+20n+9 & 91 & \Leg{364}{.} & \sim 1.6887\\ \hline
  \end{array}
\]
\vspace{0.2cm}
\subsection*{Funding.} 
This work was supported by a Hertha Firnberg grant of the  Austrian Science Fund (FWF) [T846-N35].

\subsection*{Acknowledgments.} 
The author thanks Christian Elsholtz for drawing her attention to this problem and Olivier Bordell\`es, Dmitry Frolenkov and Olivier Ramar\'e for their useful comments on previous versions of this note. The author is also very grateful to Leo Goldmakher for kindly providing the data used in the proof of Lemma \ref{lem:pomerance}.

\end{document}